\newtheorem{theorem}{Theorem}[section]
\numberwithin{equation}{section}
\begin{document}
\title{Meyniel's conjecture on the cop number: a survey}

\author{William Baird}
\address{Department of Mathematics\\
Ryerson University\\
Toronto, ON\\
Canada, M5B 2K3} \email{w3baird@ryerson.ca}
\author{Anthony Bonato}
\address{Department of Mathematics\\
Ryerson University\\
Toronto, ON\\
Canada, M5B 2K3} \email{abonato@ryerson.ca}

\keywords{Cops and Robbers, cop number, retract, random graph}
\thanks{The author gratefully acknowledge support from NSERC, Mprime, and Ryerson University}

\begin{abstract}
Meyniel's conjecture is one of the deepest open problems on the cop number of a graph. It states that for a connected graph $G$ of order $n,$ $c(G) = O(\sqrt{n}).$ While largely ignored for over 20 years, the conjecture is receiving increasing attention. We survey the origins of and recent developments towards the solution of the conjecture. We present some new results on Meyniel extremal families containing graphs of order $n$ satisfying $c(G) \ge d\sqrt{n},$ where $d$ is a constant.
\end{abstract}

\maketitle

\section{Introduction}

\emph{Cops and Robbers} is a game played on a reflexive
graph; that is, vertices each have at least one loop. Multiple edges are
allowed, but make no difference to the game play, so we always assume there
is exactly one edge between adjacent vertices. There are two players
consisting of a set of \emph{cops} and a single \emph{robber}. The game is
played over a sequence of discrete time-steps or \emph{rounds},
with the cops going first in round $0$ and then playing alternate time-steps. The cops and robber occupy vertices;
for simplicity, we often identify the player with the vertex they occupy. We
refer to the set of cops as $C$ and the robber as $R.$ When a player is ready to move in a round they must
move to a neighbouring vertex. Because of the loops, players can \emph{pass}, or remain on their own vertex. Observe that any subset of $C$ may
move in a given round.

The cops win if after some finite number of rounds, one of them can occupy
the same vertex as the robber (in a reflexive graph, this is equivalent to the cop landing on the robber).
This is called a \emph{capture}. The robber
wins if he can
evade capture indefinitely. A \emph{winning strategy for the cops} is a set
of rules that if followed, result in a win for the cops. A \emph{winning
strategy for the robber} is defined analogously. Cops and Robbers is often
called a \emph{vertex-pursuit game} on graphs, for reasons that should
now be apparent to the reader.

If we place a cop at each vertex, then the cops are guaranteed to win.
Therefore, the minimum number of cops required to win in a graph $G$ is a
well-defined positive integer, named the \emph{cop number} (or \emph{%
copnumber}) of the graph $G.$ We write $c(G)$ for the cop number of a graph $%
G$. If $c(G)=k,$ then we say $G$ is $k$-\emph{cop-win}. In the special case $%
k=1,$ we say $G$ is \emph{cop-win} (or \emph{copwin}).

The game of Cops and Robbers was first considered by Quilliot \cite{q} in
his doctoral thesis, and was independently considered by Nowakowski and
Winkler \cite{nw}. The authors of \cite{nw} were told about the game by G.\ Gabor.
Both \cite{nw,q} refer only to one cop. The introduction of the
cop number came in 1984 with Aigner and Fromme \cite{af}. Many papers
have now been written on cop number since these three early works; see the
book \cite{bonato} for additional references and background on the cop
number. Cops and Robbers has even found recent application in robotics, artificial intelligence, and so-called
\emph{moving target search}; see \cite{isawa, mold}.

\subsection{Meyniel's conjecture}

Meyniel's conjecture states that if
$G\ $is a graph of order $n,$ then%
\begin{equation}
c(G)=O(\sqrt{n}).  \label{meyn}
\end{equation}%
In other words, for $n$ sufficiently large there is a constant $d>0$ such
that
\begin{equation*}
c(G)\le d\sqrt{n}.
\end{equation*}%
We will refer to (\ref{meyn}) as the \emph{Meyniel bound}. The conjecture
was mentioned in Frankl's paper \cite{frankl} as a personal communication to
him by Henri Meyniel in 1985 (see page 301 of \cite{frankl} and reference
[8]; see Figure~\ref{meypic} for a rare photograph
of Meyniel). Despite this somewhat cryptic reference, Meyniel's conjecture stands out as one of the deepest (if not
\emph{the} deepest)\ problems on the cop number. The conjecture was largely unnoticed until recently, with several new works supplying upper bounds to the cop number or solving partial cases; see \cite{bkl,ch,fkl,lu,pw,ss}.
One of the motivations of this survey is to summarize what is currently known on the problem, while supplying the requisite background for researchers
to consider its aspects (and solution!) in the future.
\begin{figure} [h]
\begin{center}
\epsfig{figure=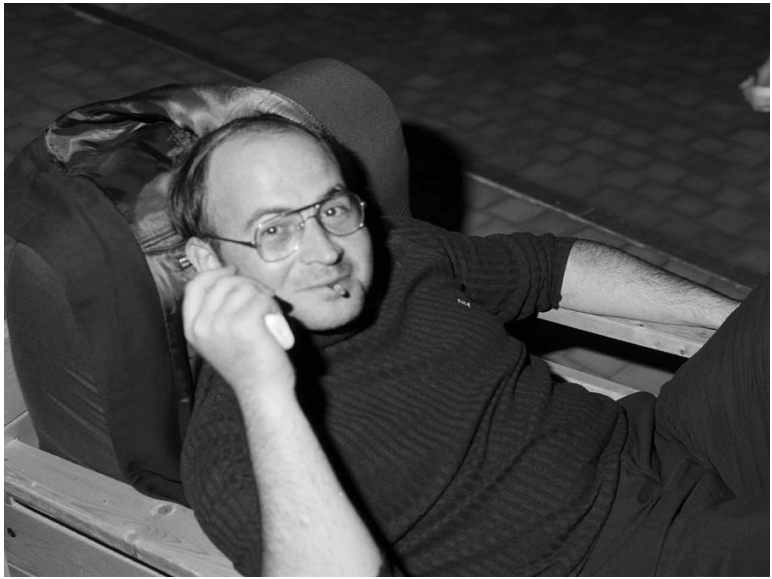, width=4in, height=3in}\caption{Henri Meyniel in Aussois, France, in the 1980's. Photo courtesy of Ge\v{n}a Hahn.}\label{meypic}
\end{center}
\end{figure}

For $n$ a positive integer, let $c(n)$ be the maximum value of $c(G)$, where
$G$ is of order $n.$ For example, $c(1)=c(2)=c(3)=1,$ while $c(4)=c(5)=2.$
Note that $c(n)$ is a non-decreasing function (to see this, note that adding a vertex of degree one does not change the cop number). We can
rephrase Meyniel's conjecture more compactly as%
\begin{equation*}
c(n)=O(\sqrt{n}).
\end{equation*}

At the heart of Meyniel's conjecture, of course, is the task of finding good upper
bounds for the cop number. Incidence graphs of projective planes show that if the conjecture is true,
then the bound is asymptotically tight (see Section~\ref{meyclose}). As a first step towards proving Meyniel's
conjecture, Frankl \cite{frankl} proved that $c(n)=o(n)$. Recent work has improved this upper bound somewhat (see Section~\ref{secchin}).
To further highlight how far we are from proving the conjecture, even the
so-called \emph{soft Meyniel's conjecture} is open, which states that for a
fixed constant $c>0,$%
\begin{equation*}
c(n)=O(n^{1-c}),
\end{equation*}

In Section~\ref{secchin} we give a history of upper bounds for the function $c(n)$. We close the section with a discussion of the conjecture in random graphs, in graph classes, and in directed graphs. We discuss families of graphs realizing the tightness of the
Meyniel bound (\ref{meyn}) in Section~\ref{meyclose}. 

For additional background and notation in graph theory, the reader is directed to the books~\cite{diestel,west}. All the graphs we consider are reflexive with no multiple edges, finite, and connected (for emphasis, we will occasionally remind the reader that the graphs under consideration are connected).

\section{Upper Bounds for $c(n)$\label{secchin}}

For many years, the best known upper
bound was the one proved by Frankl~\cite{frankl}.

\begin{theorem} \cite{frankl}
\label{ub} For $n$ a positive integer
\begin{equation*}
c(n)= O\left( n\frac{\log \log n}{\log n}\right) .
\end{equation*}
\end{theorem}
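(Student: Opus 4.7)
The plan is to bound the cop number by a domination parameter whose size is well-controlled in graphs of large minimum degree, and then to treat the low-minimum-degree case separately.

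First I would record the elementary inequality $c(G) \le \gamma(G)$, where $\gamma(G)$ is the domination number. If a cop is placed on each vertex of a dominating set $D$, then the robber's initial vertex is either in $D$ or adjacent to some $d \in D$, so on the first cop turn the cop at $d$ moves onto the robber.

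Next I would apply the classical Arnautov--Payan--Lovász domination bound: if $G$ has order $n$ and minimum degree $\delta \ge 1$, then
\[
\gamma(G) \le \frac{n\bigl(1 + \ln(1+\delta)\bigr)}{1+\delta}.
\]
Setting the threshold $\delta^{\ast} = \lceil \log n / \log \log n \rceil$, any graph $G$ with $\delta(G) \ge \delta^{\ast}$ already satisfies $c(G) \le \gamma(G) = O\bigl(n \log\log n / \log n\bigr)$, which is the desired bound.

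The remaining case, in which $G$ has many vertices of degree less than $\delta^{\ast}$, is where the work lies. My approach would be to pass from $1$-domination to $k$-domination: pick a smallest set $S \subseteq V(G)$ such that every vertex of $G$ lies within distance $k$ of $S$, where $k$ is chosen so that every ball of radius $k$ has size at least $\delta^{\ast}$. A greedy/probabilistic argument analogous to Arnautov--Payan--Lovász gives $|S| = O(n \log\log n / \log n)$. Cops placed at $S$ then funnel the robber into successively smaller regions, and we must convert this into an actual capture strategy without spending too many additional cops.

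The main obstacle is precisely this last step. Unlike the $k=1$ situation, a cop within distance $k$ of the robber does not capture on the next move, so some extra work (a shadow/retract strategy inside each ball, or an iteration of the same reduction on a polynomially smaller quotient graph) is needed to shrink the robber's territory monotonically. Balancing the parameter $k$ so that both $|S|$ and the cost of the pursuit inside balls simultaneously stay within $O(n \log\log n / \log n)$, and checking that the choice $\delta^{\ast} = \Theta(\log n / \log\log n)$ is optimal, is the crux of the calculation.
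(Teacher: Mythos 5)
There is a genuine gap. Your first case is fine: $c(G)\le\gamma(G)$ is correct, and the Arnautov--Payan--Lov\'asz bound does give $c(G)=O\left(n\frac{\log\log n}{\log n}\right)$ whenever $\delta(G)\ge \log n/\log\log n$. But the entire difficulty of Frankl's theorem lies in the complementary case, and there your argument stops at exactly the point where a proof is required. Cops stationed on a distance-$k$ dominating set neither capture the robber nor even confine him: being within distance $k$ of a cop costs the robber nothing, and you provide no mechanism by which his territory shrinks over time. You acknowledge this yourself (``the main obstacle is precisely this last step''), so what you have is a reduction of the hard case to an unproved claim rather than a proof. Note also that minimum degree is the wrong parameter to split on: attaching a single pendant vertex puts any graph into your second case without changing its cop number, so the low-$\delta$ case is essentially as general as the original problem.

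The missing idea is the one the paper's proof uses: guarding rather than dominating. By the Aigner--Fromme theorem an isometric path is $1$-guardable, and so is the closed neighbourhood of a vertex. The Moore bound, asymptotically $n=O(\Delta^{D})$, shows that the maximum degree $\Delta$ and the diameter $D$ cannot both be below roughly $\log n/\log\log n$; hence every connected graph contains a $1$-guardable set of order at least $\log n/\log\log n$, namely a closed neighbourhood of a maximum-degree vertex or an isometric path of length $D$. One cop guards that set permanently, the robber is confined to a connected component $G'$ of what remains, and the inequality $c(G)\le c(G')+1$ is iterated; since each step deletes at least $\log n/\log\log n$ vertices at the cost of one cop, the recursion yields $c(n)=O\left(n\frac{\log\log n}{\log n}\right)$. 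The guardability of isometric paths (via retractions and shadow strategies) is precisely the tool that converts ``a cop is near the robber'' into a monotone shrinking of the robber's territory, which is exactly what your distance-$k$ domination sketch lacks.
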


For a fixed integer $k\geq 1,$ an induced subgraph $H$ of $G$ is $k$-\emph{guardable} if, after finitely many moves, $k$ cops can move only in the
vertices of $H$ in such a way that if the robber moves into $H$ at round $t$, then he will be captured at round $t+1$. For example, a clique or a closed
neighbour set (that is, a vertex along with its neighbours) in a graph are $1$-guardable.

Given a connected graph $G,$ the \emph{distance} between vertices $u$ and $v$ in $G,$ denoted
$d_G(u,v),$ is the length of a shortest path connecting $u$ and $v.$
A path $P$ in $G$ is \emph{isometric} if for all vertices $u$ and $v$ of $P,$
\begin{equation*}
d_{P}(u,v)=d_{G}(u,v).
\end{equation*}%
For example, a shortest path (or geodesic) connecting two vertices is
isometric. The following theorem of Aigner and Fromme \cite{af} on guarding
isometric paths has found a number of applications.
\begin{theorem}\cite{af}
\label{pguard}An isometric path is $1$-guardable.
\end{theorem}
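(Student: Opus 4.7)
The plan is to exhibit an explicit shadowing strategy for a single cop. Fix an endpoint $v_{0}$ of the isometric path $P=v_{0}v_{1}\cdots v_{k}$ and, for every vertex $u\in V(G)$, define the \emph{shadow} of $u$ on $P$ to be $\pi(u)=v_{j(u)}$, where $j(u)=\min\{d_{G}(v_{0},u),k\}$. First I would establish two structural properties of $\pi$. Because adjacent vertices of $G$ have distances from $v_{0}$ differing by at most $1$, the shadow shifts by at most one index on $P$ per robber move. Because $P$ is isometric, $d_{G}(v_{0},v_{i})=d_{P}(v_{0},v_{i})=i$ for each $i\le k$, so $\pi$ restricted to $P$ is the identity. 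This second fact is the only place the isometry hypothesis is used, but it is what ultimately forces a capture when the robber sets foot on $P$.

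With $\pi$ in hand, the cop's strategy is the natural one: place him initially at $v_{0}$, and on each subsequent cop-turn move one step along $P$ toward the current shadow of the robber (or pass if already at it). The cop's position stays on $P$ throughout, and because both the cop and the shadow are indexed vertices of a single path, the move is always a legal edge-traversal in $G$. The first subgoal is to show that after finitely many rounds the cop occupies the robber's shadow immediately after his move. I would track the potential $\Phi_{t}=|i_{t}-j(R_{t})|$, where $v_{i_{t}}$ is the cop's position and $R_{t}$ the robber's position at time $t$. Each cop-turn reduces $\Phi$ by $1$ (unless it is already $0$), and each subsequent robber-turn increases $\Phi$ by at most $1$, so $\Phi$ is non-increasing across a full round. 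The potential can stall at a positive value only if the shadow moves strictly away from the cop every round; but since $j(R_{t})\in\{0,1,\ldots,k\}$ the shadow cannot retreat indefinitely, so $\Phi$ must eventually hit $0$.

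Once the cop first occupies the shadow, I would verify that he maintains this condition forever. If $i_{t}=j(R_{t})$ just after the cop's move, then after the robber's next move the new shadow lies at most one index away, so on his following turn the cop moves (or passes) onto the new shadow. If the robber ever steps onto $P$ at a vertex $v_{j}$ at some round $t$, then by the identity property $\pi(v_{j})=v_{j}$, so the new shadow is $v_{j}$ itself. The cop's previous position equalled the previous shadow $\pi(R_{t-1})$, whose index differs from $j$ by at most $1$ because $R_{t-1}$ is adjacent to $v_{j}$. Thus the cop moves directly onto $v_{j}$ on the next turn, completing the capture and establishing that $P$ is $1$-guardable.

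The main obstacle is the catching-up phase: an adversarial robber can try to keep his shadow moving away from the cop by always increasing $d_{G}(v_{0},\cdot)$. The key resolution is the boundedness of $P$: once $d_{G}(v_{0},R_{t})$ exceeds $k$ the shadow stalls at $v_{k}$, and thereafter the gap shrinks by $1$ each round until the cop reaches it. Everything after that is a straightforward invariant maintenance argument using the fact that the shadow changes by at most one index per robber move.
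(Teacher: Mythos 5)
Your proof is correct and follows the same shadow/retraction idea that the paper attributes to Aigner and Fromme (the survey itself gives no proof, but its remark that ``the cops stay on the image of the robber under the retraction'' is exactly your map $\pi(u)=v_{\min\{d_G(v_0,u),k\}}$, which is a retraction onto $P$ precisely because $P$ is isometric). Your catch-up potential argument and the capture-on-entry step are the standard way this is made rigorous, so the proposal matches the intended proof.
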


For completeness, we give a proof of Frankl's upper bound (inspired by the discussion of Lu, Peng
\cite{lu}) making use of the Moore bound, which is an important inequality
involving the order $n$ of graph, its maximum degree $\Delta ,$ and its
diameter. For simplicity, we will write $\mathrm{diam}(G)=D.$

\begin{theorem}
Let $G$ be a graph of order $n,$ with maximum degree $\Delta >2$ and
diameter $D.$ Then%
\begin{equation}
n \leq 1+\Delta \left( \frac{(\Delta -1)^{D}-1}{\Delta -2}\right) .  \label{mb}
\end{equation}
\end{theorem}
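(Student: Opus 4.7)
The plan is to prove the Moore bound by counting vertices using a breadth-first-search decomposition around an arbitrary vertex. Fix any vertex $v$ of $G$, and for $i \geq 0$ let $N_i(v) = \{u \in V(G) : d_G(u,v) = i\}$ be the $i$-th distance layer. Since the diameter is $D$, every vertex of $G$ lies in some $N_i(v)$ with $0 \leq i \leq D$, so $n = \sum_{i=0}^{D} |N_i(v)|$.

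First I would record the trivial bounds $|N_0(v)| = 1$ and $|N_1(v)| \leq \Delta$. The main step is to argue that for $i \geq 2$,
\begin{equation*}
|N_i(v)| \leq \Delta(\Delta-1)^{i-1}.
\end{equation*}
The key observation is that every vertex $u \in N_i(v)$ has at least one neighbour in $N_{i-1}(v)$ (namely, the predecessor on any shortest $v$--$u$ path), and conversely every vertex $w \in N_{i-1}(v)$ has at least one neighbour in $N_{i-2}(v)$, so at most $\Delta - 1$ of its $\leq \Delta$ neighbours can lie in $N_i(v)$. Summing over $w \in N_{i-1}(v)$ then yields the recurrence $|N_i(v)| \leq (\Delta - 1) |N_{i-1}(v)|$, which iterates to the displayed bound.

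To finish, I would substitute into the decomposition of $n$ and sum the resulting geometric series:
\begin{equation*}
n \leq 1 + \sum_{i=1}^{D} \Delta(\Delta-1)^{i-1} = 1 + \Delta \cdot \frac{(\Delta-1)^D - 1}{(\Delta-1) - 1} = 1 + \Delta\left(\frac{(\Delta-1)^D - 1}{\Delta - 2}\right),
\end{equation*}
which uses the hypothesis $\Delta > 2$ to ensure the denominator $\Delta - 2$ is nonzero and the geometric sum formula is valid.

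I do not anticipate any serious obstacle: the whole argument is a standard ball-growth estimate. The one place to be careful is justifying that each $w \in N_{i-1}(v)$ has at most $\Delta - 1$ (rather than $\Delta$) neighbours in $N_i(v)$, which relies on the existence of the backward neighbour in $N_{i-2}(v)$; this is why the sharper factor $(\Delta-1)^{i-1}$ appears instead of a weaker $\Delta^{i-1}$, and it is what makes the bound tight on Moore graphs.
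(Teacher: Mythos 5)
Your proof is correct: the breadth-first layer decomposition, the observation that each vertex of $N_{i-1}(v)$ sends at most $\Delta-1$ edges forward to $N_i(v)$ because one neighbour is used up going back to $N_{i-2}(v)$, and the geometric summation (valid since $\Delta>2$) together give exactly the bound (\ref{mb}). Note that the paper itself states the Moore bound without proof (the proof that follows it in the text is of Frankl's Theorem~\ref{ub}, which merely uses the asymptotic form $n=O(\Delta^D)$), so there is no argument in the paper to compare against; yours is the standard and complete derivation of the stated inequality.
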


\begin{proof}[Proof of Theorem~\ref{ub}]
Each closed neighbour set of a vertex $u$ of maximum degree $\Delta $ is $1$%
-guardable. By Theorem~\ref{pguard}, an isometric path of length $D$ is also
$1$-guardable. Asymptotically, the Moore bound becomes $
n=O(\Delta ^{D}).$

By the Moore bound, both $\Delta $ and $D$ cannot be less than
$
O\left( \frac{\log n}{\log \log n}\right) .
$
In particular, there is a subset $X$ consisting of either
a closed neighbour set or isometric path of order at least
$\frac{\log n}{\log \log n}$
in $G.$ Delete $X_{1}$ to form the graph $G^{''}.$ Although graph $G''$ may be disconnected,
the robber is confined to a connected component $G'$ of this graph. The cops then move to $G'.$
Then%
\begin{equation}
c(G)\leq c(G^{\prime })+1,  \label{1guard}
\end{equation}%
since $X_{1}$ is $1$-guardable. Now proceed by induction using (\ref%
{1guard}) to derive that
\begin{eqnarray*}
c(n) &\leq &c\left( \frac{n}{2}\right) +\frac{n/2}{\frac{\log n}{\log \log n}%
} \\
&=&O\left( n\frac{\log \log n}{\log n}\right) ,
\end{eqnarray*}%
where the equality follows by a straightforward induction.
\end{proof}

The \emph{greedy approach} used above in the proof of Frankl's theorem was used by
Chinifooroshan \cite{ch} in 2008 to give an improved bound.
\begin{theorem}\cite{ch}
\label{thechin} For $n$ a positive integer
\begin{equation}
c(n)=O\left( \frac{n}{\log n}\right) .  \label{chin1}
\end{equation}
\end{theorem}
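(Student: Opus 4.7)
The plan is to refine the greedy approach used in the proof of Theorem~\ref{ub}, improving the lower bound on the size of the $O(1)$-guardable subset that is removed at each stage from $\Omega(\log n / \log\log n)$ to $\Omega(\log n)$. Concretely, if in every connected graph $G$ on $n$ vertices we can locate a set $X$ that is guardable by $k$ cops for some absolute constant $k$ and satisfies $|X| \geq \alpha \log n$ for some absolute constant $\alpha > 0$, then exactly as in Frankl's argument we obtain
\[
c(n) \leq c\!\left(n - \alpha \log n\right) + k,
\]
and a routine induction yields $c(n) = O(n/\log n)$, which is (\ref{chin1}).

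To exhibit such a set $X$, I would again split on the maximum degree $\Delta$ and the diameter $D$. When $\Delta \geq \alpha \log n$, the closed neighbourhood of a maximum-degree vertex is $1$-guardable and large enough; when $D \geq \alpha \log n$, an isometric path of that length is $1$-guardable by Theorem~\ref{pguard}. The case that Frankl's dichotomy does not handle tightly is when both $\Delta$ and $D$ lie in the intermediate range between $\log n / \log\log n$ and $\log n$, because there neither a single closed neighbourhood nor a single isometric path contains $\Omega(\log n)$ vertices, yet the Moore bound (\ref{mb}) is already essentially tight.

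In this intermediate regime the idea is to build $X$ out of a bounded number of guardable pieces. Starting from a BFS tree of depth $D$ rooted at a suitable vertex, one extracts a family of root-to-leaf geodesics whose union has $\Omega(\log n)$ vertices; each such geodesic is $1$-guardable by Theorem~\ref{pguard}, so boundedly many of them together can be guarded by $O(1)$ cops. A double-counting/pigeonhole argument over the BFS layers, combined with (\ref{mb}), should supply either many vertices in a single layer (feeding back into the high-degree case on an induced subgraph of comparable size) or enough long isometric segments to assemble the desired composite set.

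The main obstacle, and the heart of Chinifooroshan's refinement, is to verify that the composite set $X$ is in fact guardable by only $O(1)$ cops: the robber must not be able to exploit shortcuts through $G\setminus X$ to re-enter $X$ at a point no cop can reach in time. This forces the individual pieces to be selected with a compatible closest-point projection property, mirroring the projection argument underlying Aigner--Fromme's proof of Theorem~\ref{pguard}. Once this projection property is established uniformly in the intermediate regime, the recursion closes and the bound (\ref{chin1}) follows.
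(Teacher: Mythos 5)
Your recursion framework is the same one the paper uses for Theorem~\ref{thechin} (remove a large $O(1)$-guardable induced subgraph, recurse on the robber's component), but the actual content of Chiniforooshan's theorem --- the existence, in \emph{every} connected $n$-vertex graph, of a single structure of order $\Omega(\log n)$ that is guardable by a bounded number of cops --- is missing, and the substitute you sketch cannot deliver it. In the critical regime you yourself isolate, where both $\Delta$ and $D$ are $\Theta(\log n/\log\log n)$, every root-to-leaf geodesic of a BFS tree has at most $D+1=O(\log n/\log\log n)$ vertices, so a union of boundedly many of them has only $O(\log n/\log\log n)$ vertices, not $\Omega(\log n)$. To reach $\Omega(\log n)$ vertices from such pieces you would need $\Theta(\log\log n)$ paths, hence $\Theta(\log\log n)$ cops per stage, and the recurrence $c(n)\le c(n-\alpha\log n)+O(\log\log n)$ merely reproduces Frankl's bound of Theorem~\ref{ub} rather than (\ref{chin1}). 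The other branch of your pigeonhole (many vertices in a single BFS layer) does not help either, since a BFS layer is not in general guardable by $O(1)$ cops, and feeding it back into the high-degree case does not produce a large guardable set in this regime.

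What \cite{ch}, and the paper following it, uses instead is the minimum distance caterpillar of Theorem~\ref{mdc5}: an induced tree $H$ with a dominating path $P$. The decisive point is that the spine $P$ may be short, but the pendant vertices attached along it (up to roughly $\Delta$ per spine vertex) make $H$ large; combined with the Moore bound (\ref{mb}), which forces $D\log\Delta=\Omega(\log n)$, this yields an mdc of order at least $\log n$ in every regime, which is part (2) of Theorem~\ref{mdc5}. The genuinely hard step --- and exactly the one your proposal defers as the ``main obstacle'' --- is part (1): a suitably chosen (minimum distance) caterpillar is $5$-guardable, a nontrivial extension of the Aigner--Fromme projection argument of Theorem~\ref{pguard} from a path to a tree with legs. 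That guardability lemma is precisely what separates (\ref{chin1}) from Theorem~\ref{ub}; without it, or an equivalent lemma supplying a large $O(1)$-guardable structure in the intermediate regime, your recursion does not close.
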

\noindent The bound (\ref{chin1}), therefore, represents the first important step
forward in proving Meyniel's conjecture in over 25 years. The key to proving
(\ref{chin1}) comes again from the notion of guarding an induced subgraph.
A \emph{minimum distance caterpillar}
(or \emph{mdc}) is an induced subgraph $H\ $of $G$ with the following
properties.
\begin{enumerate}
\item The graph $H$ is a tree.

\item There is a path $P$ in $H$ that is \emph{dominating}: that
is, for each vertex $u$ of $H$ not in $P,$ there is a vertex $v$ of $P$
joined to $u.$
\end{enumerate}
Figure~\ref{mdcfig} gives an example of a minimum distance caterpillar.
\begin{figure} [h]
\begin{center}
\epsfig{figure=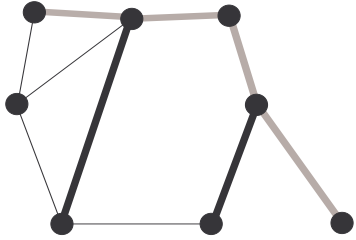, width=2in, height=1.5in}\caption{An example of an mdc,
represented by the thicker lines. The grey lines forms the path $P.$}\label{mdcfig}
\end{center}
\end{figure}

Mdc's are \textquotedblleft sticky\textquotedblright\ analogues of isometric
paths, and require just a few more cops to guard. The following theorem of \cite{ch}
can be used to prove Theorem~\ref{thechin} in a way similar to the proof of
Theorem~\ref{ub} above.
\begin{theorem}\cite{ch}
\label{mdc5}
\begin{enumerate}
\item An mdc is $5$-guardable.
\item If $G$ has order $n,$ then there is an mdc in $G$ of order at
least $\log n.$
\end{enumerate}
\end{theorem}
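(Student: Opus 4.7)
The plan is to find a large mdc inside a BFS tree of $G$. Fix any vertex $r$ and let $T$ be a BFS tree of $G$ rooted at $r$. First I would establish the following tree-theoretic lemma by induction on $N$: in any rooted tree on $N$ vertices there is a root-to-leaf path whose vertices, together with all of their tree-children, number at least $\log N$. If the root has $t$ children with subtree sizes $N_1 \ge \cdots \ge N_t$, I apply induction in the largest subtree (of size $\ge (N-1)/t$) and prepend the root plus its $t-1$ off-path children; the inequality $\log((N-1)/t) + t \ge \log N$ holds because $t - \log t \ge 1$ for every $t \ge 1$. Applied to $T$, this lemma produces a root-to-leaf path $P$ together with $\ge \log n$ neighbouring vertices, and since root-to-leaf paths in BFS trees are shortest paths in $G$, $P$ is automatically isometric. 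To convert this into a genuine \emph{induced} tree, one must prune children whose presence would create chords or short cycles in $G$; a conflict-graph argument loses only a constant factor, still giving an mdc of order $\Omega(\log n)$, which suffices for the way Theorem~\ref{thechin} consumes the bound.

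\textbf{Part (1).}
The plan is to extend the Aigner--Fromme shadow strategy (Theorem~\ref{pguard}). Let $H$ have spine $P = v_0 v_1 \cdots v_k$, isometric in $G$, and leaves $\ell_1, \ldots, \ell_m$, each $\ell_i$ adjacent in $H$ to a unique ``sticker'' $v_{s(i)} \in P$. One cop $C_1$ plays Aigner--Fromme on $P$: after a short setup, $C_1$ sits at the geodesic projection $\pi(R) \in P$ of the robber's position, so the robber cannot step onto $P$ without being captured. The remaining four cops form a \emph{moving window} around $\pi(R)$: two of them shadow $P$ at the sites $\pi(R) \pm 1$, and two more patrol leaves whose stickers lie in that window. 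A round-by-round case analysis then shows that any attempt by the robber to enter a leaf of $H$ is answered by one of the window cops on the very next move.

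The main obstacle in Part~(1) is the precise bookkeeping: one must verify that the five cops can actually maintain the moving-window invariant against every response of the robber, in particular that the robber cannot oscillate rapidly between sibling leaves via non-$H$ neighbours and so slip through. The main obstacle in Part~(2) is the inducedness requirement, since the BFS tree says nothing about the edges of $G$ within the chosen neighbourhood of $P$, so the pruning step must be shown to destroy only a controlled fraction of the candidate children.
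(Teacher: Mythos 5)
The survey does not actually prove Theorem~\ref{mdc5}; it only quotes it from \cite{ch}, so your attempt has to be measured against Chiniforooshan's argument and against the definition of mdc as stated here. For part (2), your BFS-tree counting lemma is correct (the induction $\log_2\bigl((N-1)/t\bigr)+t\ge \log_2 N$ works since $t-\log_2 t\ge 1$), and it is essentially the counting used in \cite{ch}. The fatal step is the last one: pruning the tree-caterpillar to an \emph{induced} subgraph of $G$ at the cost of ``only a constant factor'' is not a gap you can fill, because it is false. In $K_n$ the BFS tree from any root is a star, so the caterpillar produced by your lemma is the whole star of order $n$, yet every induced tree in $K_n$ has at most two vertices; more generally the legs hanging at a single spine vertex may induce a clique of $G$, so the loss is unbounded, and with the survey's literal (induced) definition part (2) is simply not true. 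What this reveals is that the inducedness in the definition above is not what \cite{ch} uses: there the caterpillar is a subgraph of a shortest-path (BFS) tree, and the ``minimum distance'' conditions --- the spine is isometric and each leg's distance to the root is one more than its sticker's --- replace inducedness. If you keep your caterpillar as a tree-subgraph with those recorded distance properties and delete the pruning step entirely, your part (2) is complete.

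For part (1), your instinct to assume the spine isometric is in fact necessary: with the definition as printed, which carries no distance condition, the statement fails (a long induced path all of whose vertices are adjacent to a single outside vertex is an mdc with no legs, and five cops confined to it cannot cover all entry points). But the sketch as written does not close even granting isometry. If the robber is adjacent to a leg whose sticker is $v_j$, then $d(R,v_j)\le 2$, so his Aigner--Fromme shadow on $P$ can be two spine-steps from $v_j$: a window of $\pm 1$ is too small. Moreover ``patrolling leaves'' is the wrong placement, since one sticker may carry many leaves while inside $H$ only the sticker itself is adjacent to all of them; the window cops should sit on the spine vertices of the window, which simultaneously answers the oscillation-between-sibling-leaves worry you raise. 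With cops on roughly five consecutive spine vertices centred at the shadow, shifting one step per robber move, the invariant is maintainable --- but that round-by-round verification (setup phase, behaviour at the ends of $P$, and the distance bookkeeping for the legs) is precisely the content of the theorem in \cite{ch} and is still missing from your write-up, so part (1) remains a plausible outline rather than a proof.
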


Let $H$ be an induced subgraph of $G.$ We say that $H$ is a \emph{retract}\index{retract} of $G$ if there is
a homomorphism $f$ from $G$ onto $H$ so that $f(x)=x$ for $x\in V(H);$ that
is, $f$ is the identity on $H.$ The map $f$ is called a \emph{retraction}.
Isometric paths are retracts in reflexive graphs: the cops stay on the image
of the robber under the retraction. If the robber moves to the subgraph,
then the cop captures the robber on his image or shadow there. One could
imagine exploiting larger retracts in graphs as an approach to proving Meyniel's conjecture. Unfortunately, this will not substantially improve
upper bounds on the cop number\index{cop number} for general graphs. A recent result from \cite%
{bkl} puts a poly-logarithmic upper bound on the order of retracts
in some graphs. The proof relies on the probabilistic method.
\begin{theorem}\cite{bkl}
For all integers $n>0,$ there is a graph of order $n$ whose largest retract
is of order $O(\log ^{8}n).$
\end{theorem}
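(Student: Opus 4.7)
The plan is to apply the probabilistic method to the Erd\H{o}s--R\'enyi random graph $G=G(n,1/2)$ (made reflexive by adjoining loops), and show that, with probability tending to $1$, every retract of $G$ has order polylogarithmic in $n$. The key structural observation is a necessary dominance condition satisfied by any retraction $f\colon G\to H$: for each $v\notin V(H)$, writing $h_{v}=f(v)$, the homomorphism property forces
\[
N_{G}(v)\cap V(H)\ \subseteq\ N_{H}[h_{v}],
\]
where $N_{H}[h_{v}]$ is the closed neighbourhood of $h_{v}$ inside $H$. So if $H$ is a retract, every vertex outside $V(H)$ must have its $H$-neighbourhood dominated by the closed $H$-neighbourhood of some member of $V(H)$.

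The next step is to fix a candidate set $H\subseteq V(G)$ with $|V(H)|=k$ and condition on the edges of $G$ lying inside $V(H)$. Conditionally, the neighbourhoods $N_{G}(v)\cap V(H)$ for distinct $v\notin V(H)$ are independent and uniform over subsets of $V(H)$. A Chernoff estimate together with a union bound over $h\in V(H)$ yields $|N_{H}[h]|\leq k/2+O(\sqrt{k\log n})$ uniformly in $h$ with very high probability, so the number of subsets of $V(H)$ dominated by some $N_{H}[h]$ is at most $k\cdot 2^{k/2+O(\sqrt{k\log n})}$. Hence, conditionally, the probability that a single $v\notin V(H)$ has a dominated $H$-profile is at most $k\cdot 2^{-k/2+O(\sqrt{k\log n})}$, and by independence across the $n-k$ outside vertices,
\[
\Pr[H\text{ is a retract}]\ \leq\ \bigl(k\cdot 2^{-k/2+O(\sqrt{k\log n})}\bigr)^{n-k}.
\]

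A union bound over the $\binom{n}{k}\leq n^{k}$ choices of $H$ then shows that, once $k$ exceeds a sufficiently large polylogarithm in $n$, the bound tends to $0$ and is summable over $k\geq c\log^{8}n$. Thus with probability $1-o(1)$ the random graph $G$ has no retract of order exceeding $c\log^{8}n$, and any realization in this event witnesses the theorem.

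The main obstacle I expect is the quantitative bookkeeping needed to pin down the exponent $8$: the Chernoff slack $O(\sqrt{k\log n})$ in the bound on $|N_{H}[h]|$ becomes comparable to $k$ once $k=\Theta(\log n)$, so one must take $k$ sufficiently large that the exponential saving $2^{-k/2}$ comfortably dominates the union-bound factor $n^{k}$. A secondary subtlety is that a retraction enforces more than the dominance condition---the map $v\mapsto h_{v}$ must globally extend to a graph homomorphism, respecting edges inside $V(G)\setminus V(H)$---but since those extra requirements only decrease the probability that $H$ is a retract, the dominance analysis alone already yields the required upper bound.
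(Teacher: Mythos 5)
Your necessary ``dominance'' condition for a retraction is correct, and bounding the probability of a necessary condition is legitimate; but the approach collapses because of a basic structural fact about reflexive graphs that makes your intended conclusion false for $G(n,1/2)$. In any reflexive graph, the closed neighbourhood $N[h]$ of any vertex $h$ is a retract: the map fixing $N[h]$ and sending every other vertex to $h$ is a homomorphism (an edge with both ends outside $N[h]$ goes to the loop at $h$, and an edge $uv$ with $u\in N[h]$, $v\notin N[h]$ goes to the edge $uh$). More generally, any set $S$ with $h\in S\subseteq N_G[h]$ induces a retract. Since a.a.s.\ every vertex of $G(n,1/2)$ has degree $(1/2+o(1))n$ and no vertex is universal, $G(n,1/2)$ a.a.s.\ has proper retracts of order roughly $n/2$. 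So the statement you are trying to prove --- that a.a.s.\ all retracts of $G(n,1/2)$ have polylogarithmic order --- is false, and indeed any graph witnessing the theorem must have every non-universal vertex of degree $O(\log^{8}n)$, i.e.\ it must be essentially sparse; a dense random graph cannot work.

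The precise step where your computation breaks is the claim that $|N_H[h]|\leq k/2+O(\sqrt{k\log n})$ uniformly in $h$ ``with very high probability.'' The candidate set $V(H)$ is chosen after the graph is exposed, so you need this bound simultaneously for all $\binom{n}{k}\approx n^{k}$ sets of size $k$; a Chernoff-plus-union-bound at that scale only controls deviations of order $k\sqrt{\log n}$, which is vacuous since it exceeds $k/2$. And the failure is not merely technical: the extremal sets are exactly those contained in one closed neighbourhood, for which $|N_H[h]|=k$, your conditional probability bound degenerates to $1$, and such sets genuinely are retracts. Repairing this forces $p$ to be polylogarithmic over $n$, but then the traces $N_G(v)\cap V(H)$ are no longer uniform subsets and the $2^{-k/2}$ saving disappears (most outside vertices have empty trace on $V(H)$ and satisfy the dominance condition trivially), so a genuinely different and more delicate argument is required; this is where the cited proof of Bollob\'{a}s, Kun and Leader does its real work and where the exponent $8$ arises. (The survey itself gives no proof of this theorem, only the remark that the argument is probabilistic, but your route as written cannot be the intended one.)
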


An improvement exists to the bound (\ref{chin1}) in Theorem~\ref{thechin}.
The following theorem was proved independently by three sets of authors.
\begin{theorem}\cite{lu,fkl,ss}
\label{meylu1} For $n$ a positive integer
\begin{equation}
c(n)\le O\left( \frac{n}{2^{(1-o(1))\sqrt{\log _2n}}}\right) .  \label{meylu}
\end{equation}
\end{theorem}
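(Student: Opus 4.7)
The plan is to push the greedy-guarding strategy of Theorems~\ref{ub} and~\ref{thechin} one step further by finding, at each iteration, an induced subgraph $X$ that is $k$-guardable but whose removal cuts out substantially more than $k\log n$ vertices. If such an $X$ always exists with $|X| \ge k\cdot 2^{(1-o(1))\sqrt{\log_2 n}}$, then the recurrence
\begin{equation*}
c(n) \le c(n - |X|) + k
\end{equation*}
telescopes to the bound~(\ref{meylu}). So the substance of the argument is a structural lemma of this form, and I would optimize the parameter as $k \approx 2^{\sqrt{\log_2 n}}$, which is the point at which the ``per-cop yield'' of the guarded subgraph is maximized.

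To locate $X$, I would run a dichotomy on how densely balls pack in $G$. Fix a radius parameter $r$ (eventually $r \approx \sqrt{\log_2 n}$) and, for each vertex $v$, consider the ball $B_r(v)$. In the \emph{sparse} case, $|B_r(v)| \le n/k$ for every $v$; a counting argument (essentially the Moore bound applied locally) then produces many vertices pairwise at distance $>r$, and hence a family of isometric paths, or minimum distance caterpillars, whose union has the required size and is $O(k)$-guardable by Theorems~\ref{pguard} and~\ref{mdc5}. In the \emph{dense} case, some $|B_r(v)|$ is at least $n/k$; here I would guard the ball by decomposing the truncated BFS tree rooted at $v$ into $O(k)$ isometric paths (or mdc's) and assigning one cop to the robber's shadow on each. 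Either outcome supplies the required $X$.

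The main obstacle is the dense case. Theorem~\ref{pguard} only gives a \emph{single} isometric path for free, and it is not immediate that a large ball decomposes into $k$ paths that can be policed in parallel without the robber exploiting the branching structure of the BFS tree. Handling this requires maintaining consistent shadow assignments on overlapping geodesics as the robber moves between branches, and this is the technical heart of the theorem. The three proofs in~\cite{lu,fkl,ss} resolve this step in different ways --- a direct density/diameter optimization, a probabilistic choice of geodesic covers, and an explicit caterpillar enlargement, respectively --- but each realizes the same cost-yield balance that produces the sub-polynomial improvement $2^{(1-o(1))\sqrt{\log_2 n}}$ over Theorem~\ref{thechin}.
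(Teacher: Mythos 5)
Your skeleton (iteratively delete a $k$-guardable set $X$ and telescope $c(n)\le c(n-|X|)+k$) is the same greedy frame as Theorems~\ref{ub} and~\ref{thechin}, but neither branch of your ball dichotomy actually produces a set $X$ with per-cop yield $2^{(1-o(1))\sqrt{\log_2 n}}$, and this is a genuine gap, not a technicality. In your sparse case, an $r$-separated set only hands you isometric paths (or mdc's) of length about $r$, each costing a cop by Theorem~\ref{pguard}; with $r\approx\sqrt{\log_2 n}$ the yield per cop is $\sqrt{\log_2 n}$, which is worse even than the $\log n$ of Theorem~\ref{thechin}, let alone $2^{\sqrt{\log_2 n}}$. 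In your dense case the arithmetic is fatal: every path inside $B_r(v)$ has at most $2r+1$ vertices, so covering a ball of size $n/k$ needs at least $\frac{n/k}{2r+1}$ geodesics, which for $k\approx 2^{\sqrt{\log_2 n}}$ and $r\approx\sqrt{\log_2 n}$ is vastly larger than $O(k)$; and raising $r$ to rescue the sparse case only makes the dense-case deficit worse. There is no choice of $(r,k)$ for which "guard a big ball by $O(k)$ short geodesics" and "harvest $r$ vertices per cop from paths" simultaneously reach the claimed yield, so the scheme as written cannot beat the bound of Theorem~\ref{thechin}.

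The missing idea is that the known proofs do not try to guard and delete one large static subgraph per batch of cops. As noted after Theorem~\ref{meylu1}, the arguments of \cite{lu,ss} keep the greedy/iterative outline and \cite{fkl} works with expansion, but all three essentially rely on the probabilistic method: cops are placed at random so as to dominate the robber's set of \emph{possible positions several moves in the future}, across roughly $\sqrt{\log_2 n}$ scales, with a win--win between rapid expansion of the robber's reachable sets (random cops then cut him off) and slow expansion (the robber is confined and one recurses at much lower cost). That multi-scale, trajectory-domination mechanism is where the factor $2^{(1-o(1))\sqrt{\log_2 n}}$ comes from, and it is absent from your proposal; your closing attribution of a deterministic "density/diameter optimization" to one paper and a "caterpillar enlargement" to another also misstates what those papers do. Note finally that the survey itself offers no proof of this theorem, so the comparison here is with the cited works rather than with an in-paper argument.
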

\noindent The bound in (\ref{meylu}) is
currently the best upper bound for general graphs that is known, but it is
still far from proving Meyniel's conjecture or even the soft version of the
conjecture. We note that the proofs of Theorem~\ref{meylu1} in \cite{lu,ss} use the
greedy approach as in the proofs of Theorems~\ref{ub} and \ref{thechin}, while expansion properties
are used in \cite{fkl}. In addition, all of the proofs use the probabilistic method, which represents a new and interesting
approach to proving the conjecture.

\subsection{Random graphs}
As further
support for its veracity, Meyniel's conjecture has been proven
for binomial random graphs $G(n,p).$ Let $p=p(n)$ be a function of $n$ with range in $%
[0,1].$ The probability space $\mathcal{G}(n,p)=(\Omega ,\mathcal{F},\mathbb{%
P})$ of random graphs is defined so that $\Omega $ is the set of all graphs with vertex
set $[n]$, $\mathcal{F}$ is the family of all subsets of $\Omega $, and for
every $G\in \Omega $
\begin{equation*}
\mathbb{P}(G)=p^{|E(G)|}(1-p)^{{\binom{n}{2}}-|E(G)|}\,\text{.}
\end{equation*}%
The space $G(n,p)$ can be viewed as a result of ${\binom{n}{2}}$ independent
coin flips, one for each pair of vertices $\{x$,$y\}$, with the
probability that $x$ and $y$ are joined equaling $p$. We will abuse
notation and consider $G(n,p)$ as a graph, and so write $c(G(n,p))$ (note
that the cop number is a random variable on the probability space $G(n,p))$.
We say that an event holds \emph{asymptotically almost surely (a.a.s.)} if
it holds with probability tending to $1$ as $n\rightarrow \infty $.

In 2009, Bollob\'{a}s, Kun, Leader proved the following result \cite{bkl},
which proves Meyniel's bound in random graphs $G(n,p)$ up to a
multiplicative logarithmic factor for a wide range of $p=p(n).$ The basic idea behind the proof
is to surround the robber using Hall's theorem on matchings, and then use induction.
\begin{theorem} \label{bkl_t}If $p\geq 2.1\log n/n,$ then a.a.s.\
\begin{equation}
c(G(n,p))=O(\sqrt{n}\log n\,).\label{logn}
\end{equation}
\end{theorem}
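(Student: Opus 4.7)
The plan is to give the cops a shadowing strategy grounded in Hall's marriage theorem, leveraging the expansion of $G(n,p)$. I would begin by recording the structural properties that hold asymptotically almost surely when $p \geq 2.1\log n/n$: every vertex degree concentrates near $np$, and for every vertex $v$ and every subset $T \subseteq N(v)$ the external neighborhood satisfies $|N(T)| \geq \min\{c|T|\cdot np,\, n/2\}$ for a suitable constant $c > 0$. These follow from Chernoff estimates together with a union bound over all pairs $(v,T)$; the hypothesis $p \geq 2.1\log n/n$ is exactly what is needed so that the failure probability for any single pair beats the number of pairs in the union bound.

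With such a ``typical'' instance of $G(n,p)$ fixed, place $k = C\sqrt{n}\log n$ cops (with $C$ a suitable constant), say on a uniformly random vertex subset. The central invariant I would aim to maintain is a Hall-type condition at every round: if the robber is at vertex $r$, then for every $T \subseteq N(r)$, the number of cops lying in $N(T)$ is at least $|T|$. By Hall's theorem this produces a system of distinct representatives pairing each potential robber move $u \in N(r)$ with a distinct cop $c_u$ adjacent to $u$. The cops reposition according to this matching, and the moment the robber steps to some $u$, the cop $c_u$ steps onto $u$ and captures him. The expansion estimate from the first paragraph is precisely what ensures this matching exists a.a.s.\ simultaneously for every possible robber location.

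Because a one-shot random placement is too weak to directly enforce Hall's condition uniformly with only $O(\sqrt{n}\log n)$ cops, the argument proceeds by induction on the graph order: one shows that $O(\sqrt{n}\log n)$ cops suffice to confine the robber to an induced subgraph of size at most $n/2$ (on which the random-graph expansion still looks essentially the same, restricted). Applying the induction hypothesis on the residual subgraph uses $O(\sqrt{n/2}\log(n/2))$ further cops, and summing the geometric series recovers the total bound of $O(\sqrt{n}\log n)$. The main obstacle will be verifying the Hall condition with the right probability: one must union-bound failures over all vertices $r$ and, worse, all subsets $T \subseteq N(r)$, which can number up to $2^{\Theta(\log n)}$; making this work is exactly what pins down both the constant $2.1$ in the hypothesis and the extra $\log n$ factor beyond the conjectured $\sqrt n$.
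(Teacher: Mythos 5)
Your overall frame --- surround the robber via a Hall-type matching and then recurse --- is exactly the idea the survey attributes to Bollob\'as, Kun and Leader, so at that level you are on the paper's track. But the specific Hall condition you propose cannot be realized with $O(\sqrt{n}\log n)$ cops in the sparse regime $np=\Theta(\log n)$, and this is a genuine gap, not a technicality. Your invariant asks that, for the robber's current vertex $r$ and every $T\subseteq N(r)$, at least $|T|$ cops already sit in $N(T)$; in particular every neighbour of $r$ must have a cop at distance one, so the cops must essentially dominate the ball of radius two around the robber. When $p=2.1\log n/n$ that ball has only $\Theta(\log^{2}n)$ vertices, so a set of $C\sqrt{n}\log n$ cops (random or not --- the robber simply stays away from them) a.a.s.\ contains no cop in it: the expected number of cops there is $O(\log^{3}n/\sqrt{n})=o(1)$. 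No union bound or tuning of the constant $2.1$ rescues this; what you are demanding is comparable to a dominating set, whose size in this range of $p$ is far larger than $\sqrt{n}\log n$. The missing idea is to intercept at a distance: the matching should pair cops not with $N(r)$ but with the vertices of the sphere of radius $d$ around the robber, where $d$ is chosen so that $(np)^{d}$ is of order $\sqrt{n}$. Each matched cop is required only to be within distance $d$ of its assigned sphere vertex, so it can occupy that vertex during the $d$ rounds the robber needs to reach the sphere; Hall's theorem, via the expansion estimates you set up in your first paragraph, is what guarantees such a system of distinct representatives, and the sphere having roughly $\sqrt{n}$ vertices is where the $\sqrt{n}\log n$ cop count actually comes from.

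Your inductive repair also does not go through as stated. The territory to which the robber is confined is produced adversarially by the play; it is an induced subgraph of the one fixed instance, not a fresh binomial random graph, and even heuristically its density is still $p$, while $p\ge 2.1\log n/n$ does not imply $p\ge 2.1\log m/m$ for $m\approx n/2$, so an ``a.a.s.\ for $G(m,p)$'' induction hypothesis cannot be invoked on it. The iteration has to be run using deterministic expansion and degree properties proved a.a.s.\ in advance for \emph{all} relevant subsets of the original graph, with the trapping argument repeated on the (much smaller than $n/2$) region enclosed by the occupied sphere; the geometric-series bookkeeping is fine, but only once the single-round trapping mechanism is the distance-$d$ one described above.
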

Recent work by Pra{\l }at and Wormald~\cite{pw} removes the $\log n$ factor in (\ref{logn}) and hence, proves the Meyniel bound for random graphs (and also for random regular graphs).

\subsection{Graph Classes} While Meyniel's conjecture is unresolved for general graphs, we may attempt
to solve it in certain graph classes. In some cases, the extra structure
available in a class of graphs can bound the cop number from above more
easily. For example, Aigner and Fromme \cite{af} proved that $c(G)\leq 3$ if
$G$ is planar. For a fixed graph $H,$ Andreae \cite{andreae1} generalized
this result by proving that the cop number of a $K_{5}$-minor-free graph (or $K_{3,3}$%
-minor-free graph) is at most $3$ (recall that planar graphs are exactly
those which are $K_{5}$-minor-free and $K_{3,3}$-minor-free). Andreae \cite%
{andreae2} also proved that for any graph $H$ the cop number of the class of $H$-minor-free
graphs is bounded above by a constant.

Lu and Peng \cite{lu} show that the
Meyniel bound holds in the class of graphs with diameter two. The proof uses
the notion of guarding subgraph, but also uses a
randomized argument.
\begin{theorem} \cite{lu}
\label{meyd2}If $G$ is a graph on $n$ vertices with diameter two, then%
\begin{equation}
c(G)\leq 2\sqrt{n}-1.  \label{mey2}
\end{equation}
\end{theorem}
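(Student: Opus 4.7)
The plan is a case split on the maximum degree of $G$. Let $v$ be a vertex of maximum degree $\Delta$, and write $A = N(v)$ and $B = V(G)\setminus N[v]$. The diameter-two hypothesis forces that every $u\in B$ satisfies $N(u)\cap A\neq\emptyset$, since such a $u$ is at distance exactly $2$ from $v$ and hence shares a neighbour with it.

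If $\Delta\leq 2\sqrt{n}-2$, place a cop on every vertex of $N[v]$; this uses at most $2\sqrt{n}-1$ cops. The observation above shows $N[v]$ is a dominating set of $G$: every vertex either lies in $N[v]$ or is adjacent to some vertex of $A\subseteq N[v]$. Consequently, whatever vertex the robber chooses, some cop is already on his vertex or adjacent to it, and the cops' first move captures him.

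If instead $\Delta\geq 2\sqrt{n}-1$, then $|N(v)|$ is large and we turn to a randomized placement. Station one cop permanently at $v$ (this cop $1$-guards the closed neighbourhood $N[v]$, as already observed in the discussion before Theorem~\ref{pguard}) and pick $2\sqrt{n}-2$ further cops uniformly at random from $N(v)$. For each $u\in B$ set $d_u = |N(u)\cap A|\geq 1$. A union-bound calculation estimates the expected number of $u\in B$ missed by the random set $S$ by $\sum_{u\in B}(1-d_u/\Delta)^{2\sqrt{n}-2}$, and the aim is to push this expectation below $1$, so that by the probabilistic method some placement yields a dominating set of size at most $2\sqrt{n}-1$ and the robber is caught in round~$1$.

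The main obstacle is the vertices $u\in B$ with very small $d_u$, in particular $d_u=1$, for which the corresponding union-bound term is close to $1$ and naive random sampling breaks down. The intended remedy is to split $B$ into a ``dense'' part (covered by the random cops via the union bound) and a ``sparse'' part (covered deterministically by dedicating a few of the cops to the witnessing common neighbours of $v$), using the double-counting identity $\sum_{u\in B}d_u=\sum_{a\in A}|N(a)\cap B|$ to bound the size of the sparse part. Calibrating the threshold separating dense from sparse, and the size of the random sample, so that the total cop budget is exactly $2\sqrt{n}-1$ rather than a larger $O(\sqrt{n})$ or $O(\sqrt{n}\log n)$ expression, is the delicate quantitative step.
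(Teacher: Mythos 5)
There is a genuine gap, and it is structural rather than a matter of calibrating constants. (Note the survey does not reproduce the proof of Theorem~\ref{meyd2}; it only cites \cite{lu} and remarks that the proof combines guarding of subgraphs with a randomized argument, so your attempt must stand on its own.) Your entire plan, in both cases, is to capture the robber in the first round by exhibiting a dominating set of size at most $2\sqrt{n}-1$ (in Case 2, one contained in $N[v]$). Such a set need not exist: a diameter-two graph can have domination number much larger than $2\sqrt{n}$. For instance, take $G(n,p)$ with $p=\sqrt{3\ln n/n}$. A.a.s.\ it has diameter two (the probability that some nonadjacent pair has no common neighbour is at most $\binom{n}{2}(1-p^2)^{n-2}\le n^{2}e^{-(1-o(1))3\ln n}\to 0$), yet a first-moment count shows a.a.s.\ it has no dominating set of size $s=2\sqrt{n}$: the expected number of such sets is at most $\binom{n}{s}\bigl(1-(1-p)^{s}\bigr)^{n-s}\le\exp\bigl(s\ln n-(n-s)(1-p)^{s}\bigr)$, and since $ps=2\sqrt{3\ln n}$ gives $(1-p)^{s}=n^{-o(1)}$, the negative term is $n^{1-o(1)}$ while the positive term is only $n^{1/2+o(1)}$; in fact $\gamma$ here is $\Theta(\sqrt{n\log n})$. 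So no dense/sparse split or double-counting calibration can rescue Case 2: the object you are trying to build does not exist, and the difficulty is not confined to the $d_u=1$ vertices you flag --- in this example a typical $u\in B$ has $d_u\approx np^{2}=3\ln n$ and $\Delta\approx\sqrt{3n\ln n}$, so even the ``dense'' union bound $\sum_u(1-d_u/\Delta)^{2\sqrt{n}}\approx n\cdot n^{-o(1)}$ diverges. This is precisely why the theorem is interesting: $c(G)$ can be strictly smaller than the domination number, so any proof of the bound $2\sqrt{n}-1$ must exploit the dynamics of the game --- cops repositioning over several rounds in response to the robber, only ever needing to cover or guard the portion of the graph relevant to his current position --- which is what the randomized guarding strategy of \cite{lu} does. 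A purely static ``place a dominating set and win in round one'' argument can never do better than $\gamma(G)$, hence at best $O(\sqrt{n\log n})$ for diameter two.

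Two smaller points. Your Case 1 is correct but split in the wrong direction: in a diameter-two graph $N[u]$ dominates for \emph{every} vertex $u$, so the favourable case is small \emph{minimum} degree ($\delta(G)\le 2\sqrt{n}-2$), and the residual case should be $\delta(G)\ge 2\sqrt{n}-1$, which is a far stronger hypothesis than your ``maximum degree is large'' (the latter tells you nothing about the degrees $d_u$ you need). Even so, with $\delta\ge 2\sqrt{n}-1$ the static approach only yields the classical bound $\gamma(G)\le n\bigl(1+\ln(\delta+1)\bigr)/(\delta+1)=O(\sqrt{n}\log n)$, and the example above shows the logarithmic loss is unavoidable for domination. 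Finally, the estimate $(1-d_u/\Delta)^{2\sqrt{n}-2}$ implicitly treats your random sample as drawn with replacement; that is a fixable technicality, but it is dwarfed by the structural obstruction above.
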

\noindent The same bound (\ref{mey2}) was also shown in \cite{lu} in the case when $G$
is bipartite and of diameter at most three.

The incidence graphs of projective planes are bipartite of diameter three,
and so show that the bound (\ref{mey2}) is asymptotically tight in that class. Meyniel extremal
families whose members have diameter two and bounded chromatic number are given in \cite{bb}.

\subsection{Directed graphs} Another direction is the analogue of Meyniel's conjecture in digraphs. For the conjecture to be sensible,
we should restrict our attention to strongly connected graphs (otherwise, a digraph can have cop number $n-1$ even
if the underlying graph is connected). Recent work by Frieze et al.\ \cite{fkl} using expansion properties shows that the cop number
of a connected digraph of order $n$ is $O(n(\log \log n)^2 /\log n).$ Can we do better? In other words, does the Meyniel bound
hold for strongly connected digraphs? For tournaments, Meyniel's bound fails to be tight. A set $D$ is dominating in a tournament, if for each vertex $x$ not in $D,$ there is a vertex $y$ in $D$ with $(y,x)$ a directed edge. The \emph{domination number} of a tournament $G$, written $\gamma (G)$, is the minimum cardinality of a dominating set. Erd\H{o}s proved (see p.\ 28 of \cite{moon}) that if $G$ is a tournament on $n$ vertices, then $\gamma (G) \le \lceil \log_2 n \rceil,$ thereby giving a logarithmic upper bound on the cop number of tournaments.

\section{Lower bounds for $c(n)$}\label{meyclose}

Meyniel's conjecture states that the cop number is at most approximately $\sqrt{n}.$
Examples are known (and will be discussed immediately below) which have cop
number very close to $\sqrt{n}.$ However, the question remains how close the
cop number can approach $\sqrt{n}$ \emph{from below}.

For graphs with large cop number, we turn to incidence graphs. An \emph{incidence structure} consists
of a set $P$ of points, and a set $L$ of lines along with an incidence relation consisting of ordered pairs of
points and lines. Given an incidence structure $S$, we define its incidence graph $G(S)$ to be the bipartite graph whose vertices consist
of the points (one color), and lines (the second color), with a point joined to a line if it is incident with it in $S.$ Incidence structures (and graphs) are quite general, but we restrict our attention to \emph{partial linear spaces}, where any pair of points (lines) is incident with at most one line (point). It is an exercise that the incidence graph of a partial linear space has diameter at least three with girth at least $6.$

Projective planes are some of the most well-studied examples of incidence structures. A \emph{projective plane} consists of a set of
points and lines satisfying the following axioms.

\begin{enumerate}
\item There is exactly one line incident with every pair of distinct points.

\item There is exactly one point incident with every pair of distinct lines.

\item There are four points such that no line is incident with more than two
of them.
\end{enumerate}

Hence, projective planes are particular partial linear spaces; condition three rules out certain degenerate cases where all points are on a single line or all lines are on a single point. We are interested in finite projective planes, which always have $q^{2}+q+1$
points for some integer $q>0$ (called the \emph{order} of the plane).

For a given projective plane $P$, define $G(P)$ to be the
bipartite graph with red vertices the points of $P,$ and the blue vertices
represent the lines. Vertices of different colors are joined if they are
incident. We call this the \emph{incidence graph of} $P.$ See Figure~\ref%
{fanogp} for $G(P), $where $P$ is the Fano plane (that is, the projective
plane of order $2)$.
\begin{figure} [h]
\begin{center}
\epsfig{figure=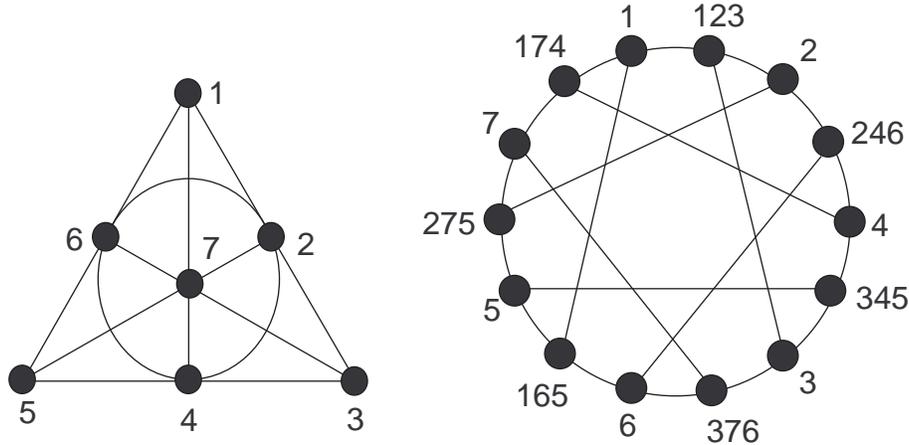}\caption{The Fano
plane and its incidence graph. Lines are represented by triples.}\label{fanogp}
\end{center}
\end{figure}
We note the incidence graph of the Fano plane is isomorphic to the famous \emph{Heawood graph}.

Aigner and Fromme proved the following theorem in \cite{af}, which provides a useful lower bound on the cop number in some graphs. The girth of a graph is the length of a shortest cycle. The \emph{minimum degree} of $G$ is written $%
\delta (G).$
\begin{theorem} \cite{af}
\label{mindeg}If $G$ has girth at least $5,$ then $c(G)\geq \delta (G).$
\end{theorem}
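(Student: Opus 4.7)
The plan is to show that with $k := \delta(G) - 1$ cops on $G$, the robber has a survival strategy; this proves $c(G) \geq \delta(G)$. The robber's strategy maintains, after each of his moves, the invariant that every cop is at distance at least $2$ from his current vertex $r$. Since a cop moves at most one edge per round, the invariant guarantees that no cop sits on $r$ at the start of the robber's next turn, so the robber is never captured.

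The main lemma to establish is a local counting claim: whenever the robber is at $r$ with no cop at $r$, there exists $u \in N[r]$ whose closed neighborhood contains no cop, and moving to such a $u$ restores the invariant. To prove this, for each cop $c$ I would bound the set $B_c := N[r] \cap N[c]$ of vertices of $N[r]$ blocked from safety by $c$. If $d_G(c,r) \geq 3$, then $B_c = \emptyset$. If $d_G(c,r) = 2$, then $c$ has at most one neighbor in $N(r)$, since two such neighbors would yield a $4$-cycle with $r$; thus $|B_c| \leq 1$. If $c \in N(r)$, then $B_c = \{r,c\}$, because any further vertex of $N(r) \cap N(c)$ would close a triangle on $r$ and $c$, violating girth $\geq 5$. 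Grouping cops by their distance to $r$ as $C_1$ (distance $1$) and $C_2$ (distance $2$), the blocked set in $N[r]$ lies inside $\{r\} \cup C_1 \cup \{w_c : c \in C_2\}$ and therefore has cardinality at most $1 + |C_1| + |C_2| \leq 1 + k$. Since $|N[r]| = 1 + \deg(r) \geq 1 + \delta > 1 + k$, a safe vertex $u$ exists.

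For initialization, the robber needs to start at some $r_0$ with no cop in $N[r_0]$. The existence of such $r_0$ follows from a similar counting argument, using the Moore-type inequality $n \geq 1 + \delta + \delta(\delta-1) = \delta^2 + 1$ available for graphs of girth at least $5$: this creates enough ``room'' so that fewer than $\delta$ cops cannot cover $V$ with their closed neighborhoods.

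The main obstacle is the counting lemma itself, specifically the simultaneous use of both girth conditions (absence of triangles for cops in $N(r)$, absence of $4$-cycles for cops at distance $2$) and the accounting that assigns each cop in $C_1$ the unique extra blocked vertex $r$ only once. Once the bound $|\text{blocked}| \leq 1 + k$ is in hand, induction on the number of rounds completes the argument.
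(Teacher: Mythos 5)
Your escape lemma is correct and is essentially the classical Aigner--Fromme argument (note the paper states Theorem~\ref{mindeg} without proof, citing \cite{af}, so there is no internal proof to compare against): with girth at least $5$, a cop adjacent to the robber's vertex $r$ blocks only $r$ and itself (no triangles), a cop at distance two blocks at most one vertex of $N(r)$ (no $4$-cycles), and a cop at distance at least three blocks nothing; hence $\delta-1$ cops block at most $1+(\delta-1)=\delta<|N[r]|$ vertices of $N[r]$, so the robber can always move to (or stay at) a vertex whose closed neighbourhood is cop-free, and the distance-at-least-two invariant is maintained, which rules out capture.

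The one genuine gap is the initialization. You justify the existence of a starting vertex $r_0$ with no cop in $N[r_0]$ by the inequality $n\ge 1+\delta+\delta(\delta-1)=\delta^2+1$, claiming this gives enough room so that $\delta-1$ cops cannot cover $V$ with their closed neighbourhoods. That inequality alone does not suffice: the cops may occupy vertices of large degree, so the union of their closed neighbourhoods can have size up to $(\delta-1)(\Delta+1)$, which can greatly exceed $\delta^2+1$ when $\Delta\gg\delta$. The needed fact is true, but it requires a sharper count. For example, apply the distance-two count at a vertex $v$ of maximum degree: girth at least $5$ makes $v$, its $\Delta$ neighbours, and their remaining neighbours pairwise distinct, so $n\ge 1+\Delta+\Delta(\delta-1)=1+\delta\Delta$, while $(\delta-1)(\Delta+1)=\delta\Delta+\delta-\Delta-1\le\delta\Delta-1<n$, so some vertex lies outside every cop's closed neighbourhood. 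Alternatively, one can show directly that in a graph of girth at least $5$ every dominating set has size at least $\delta$ (each neighbour of a vertex outside the dominating set requires its own dominator, again by the absence of triangles and $4$-cycles), so $\delta-1$ cops never dominate $V$. With either patch the induction on rounds goes through and the proof is complete.
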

\noindent Hence, Theorem~\ref{mindeg} proves that $c(G(P))\ge q+1.$ As proven in \cite{pra}, we actually have that $c(G(P))=q+1$. However, the orders of $G(P)$ depend on the orders of projective planes. The only orders where projective planes are known to
exist are prime powers; indeed, this is a deep conjecture in finite geometry.
What about integers which are not prime powers? An infinite family of graphs $(G_n: n\ge 1)$ is \emph{Meyniel extremal} if there is a constant $d$ such that for all $n$, $c(G_n)\ge d\sqrt{|V(G_n)|}.$

Recall the
famous \emph{Bertrand postulate} (see \cite{che,erdosb}).
\begin{theorem}
\label{bpost}For all integers $x>1,$ there is a prime in the interval $%
(x,2x) $.
\end{theorem}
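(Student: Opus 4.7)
The plan is to follow Erdős's elementary proof via the central binomial coefficient $\binom{2n}{n}$. Setting $x = n$ and translating the statement, it suffices to show that for every integer $n \geq 2$, some prime $p$ satisfies $n < p < 2n$. The strategy is to obtain a sharp lower bound on $\binom{2n}{n}$, a matching upper bound coming from its prime factorization under the hypothesis that no prime lies in $(n, 2n)$, and then derive a numerical contradiction for all sufficiently large $n$. The small cases are verified by exhibiting an explicit prime.

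First I would establish the inequality $\binom{2n}{n} \geq \frac{4^n}{2n+1}$, which follows because $\binom{2n}{n}$ is the largest of the $2n+1$ terms in the expansion of $(1+1)^{2n}$. Next, for each prime $p$, Legendre's formula gives the exponent $v_p\!\left(\binom{2n}{n}\right) = \sum_{k \geq 1} \left(\lfloor 2n/p^k\rfloor - 2\lfloor n/p^k\rfloor\right)$, and each summand is $0$ or $1$, with all terms vanishing once $p^k > 2n$. Hence $p^{v_p} \leq 2n$ for every prime divisor. Two refinements are crucial: primes $p$ with $\sqrt{2n} < p \leq 2n$ contribute exponent at most $1$, and primes $p$ with $\frac{2n}{3} < p \leq n$ contribute exponent $0$ (since then $p$ and $2p$ are the only multiples of $p$ below $2n$, while $n < 2p \leq 2n$ forces cancellation). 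In parallel, I would invoke the Chebyshev-type estimate $\prod_{p \leq m} p \leq 4^{m-1}$, proved by a short induction on $m$ splitting into even and odd cases and using that $\binom{2k+1}{k}$ divides the product of primes in $(k+1, 2k+1]$.

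Assuming for contradiction that no prime lies in $(n, 2n)$, combining the above bounds yields
\begin{equation*}
\frac{4^n}{2n+1} \;\leq\; \binom{2n}{n} \;\leq\; (2n)^{\sqrt{2n}} \prod_{p \leq 2n/3} p \;\leq\; (2n)^{\sqrt{2n}} \cdot 4^{2n/3},
\end{equation*}
which after taking logarithms fails for all sufficiently large $n$ (concretely for $n \geq 468$ in the standard write-up). For the remaining range $2 \leq n < 468$, I would exhibit the explicit prime chain $3, 5, 7, 13, 23, 43, 83, 163, 317, 631$, in which each term is less than twice the previous one, guaranteeing a prime in every interval $(n, 2n)$ with $n \leq 468$. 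This completes the proof.

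The main obstacle is obtaining the Chebyshev bound $\prod_{p \leq m} p \leq 4^{m-1}$ with the right constant: a naive prime-counting estimate is too weak, and one must exploit the divisibility of $\prod_{k+1 < p \leq 2k+1} p$ into the middle binomial coefficient $\binom{2k+1}{k}$, together with the identity $\binom{2k+1}{k} + \binom{2k+1}{k+1} \leq 4^k$. Once this lemma is in hand, the rest of the argument is a careful but routine bookkeeping of exponents and an asymptotic comparison.
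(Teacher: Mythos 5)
Your proposal is correct, but note that the paper does not prove this statement at all: Bertrand's postulate is quoted as a classical theorem with citations to Chebyshev \cite{che} and Erd\H{o}s \cite{erdosb}, and is then used as a black box to build Meyniel extremal families. What you have written out is precisely the content of the cited Erd\H{o}s reference: the elementary argument via the central binomial coefficient, with the lower bound $\binom{2n}{n}\ge 4^n/(2n+1)$, the Legendre/Kummer control $p^{v_p}\le 2n$, the vanishing of exponents for $2n/3<p\le n$, the Chebyshev-type bound $\prod_{p\le m}p\le 4^{m-1}$ proved through $\binom{2k+1}{k}\le 4^k$, and the final contradiction for large $n$ plus a prime chain for small $n$. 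The quantitative details you quote (contradiction for $n\ge 468$, chain $3,5,7,13,23,43,83,163,317,631$, each term less than twice its predecessor, covering all $n\le 630$) are consistent and complete the small cases, with $n=2$ handled by the prime $3$. So your write-up is a sound self-contained proof of the theorem, going beyond the paper, which merely cites it; the only (standard, harmless) caveat is that the exponent-zero claim for $2n/3<p\le n$ needs $n$ at least a small constant, which is absorbed by your explicit small-case check.
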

In \cite{pra}, a Meyniel extremal family was given using incidence graphs of projective planes and Theorem~\ref{bpost}. Using Bertrand's postulate, it was shown that
$$c(n)\geq \sqrt{\frac{n}{8}} $$
for $n\geq 72.$
Using this theorem and a result from number theory, it was shown in \cite{pra} that for sufficiently large $n,$%
\begin{equation}
c(n)\geq \sqrt{\frac{n}{2}}-n^{0.2625}.  \label{meyx1}
\end{equation}
We do not know if (\ref{meyx1}) is the best possible lower bound for $c(n),$ and it would be interesting to find out.

A graph is $(a,b)$-\emph{regular} if each vertex has degree either $a$ or $b.$ We provide a new construction, giving infinitely many Meyniel extremal families containing graphs which are $(a,b)$-regular for certain $a$ and $b.$ An
\emph{affine plane of order }$q$ has $q^2$-many points, each line has $q$ points, and each pair of distinct points is on a unique line. In an affine plane, there are $q^{2}+q$ lines, and each point is on $q+1$
lines. The relation of parallelism on the set of lines is an equivalence relation,
and the equivalence classes are called parallel classes. Note that each
parallel class contains $q$ lines, and there are $q + 1$ parallel classes.

\begin{theorem}
\label{mey_extr}For a prime power $q$ and all $k=o(q),$ there exist graphs of order $2q^{2}+(1-k)q$ which are $(q+1-k,q)$-regular and have cop number between $[q+1-k,q].$
\end{theorem}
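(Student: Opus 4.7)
My plan is to construct the required graphs directly from affine planes. Let $\pi$ be an affine plane of order $q$ (which exists for every prime power $q$), and let $G$ be obtained from the incidence graph of $\pi$ by deleting all lines belonging to any $k$ chosen parallel classes, while retaining every point. Each parallel class contains exactly $q$ lines, so $G$ has $q^2 + (q^2 + q - kq) = 2q^2 + (1-k)q$ vertices. Each point originally lay on exactly one line per parallel class and so now has degree $q+1-k$, while each surviving line still has its original $q$ points; this yields the claimed $(q+1-k,q)$-regularity. Connectivity of $G$ for $k = o(q)$ follows because, with at least two parallel classes surviving, any two points are joined by a path of length at most $4$ through the intersection of two nonparallel surviving lines.

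For the lower bound, observe that since any two distinct points of $\pi$ lie on a unique common line, the incidence graph of $\pi$ is bipartite and contains no $C_4$, and hence has girth at least $6$; deleting lines creates no new cycles, so $G$ has girth at least $6$ as well. For $k \geq 1$ the minimum degree is $\delta(G) = q+1-k$, and Theorem \ref{mindeg} yields $c(G) \geq q+1-k$.

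For the upper bound $c(G) \leq q$, I would exhibit an explicit winning strategy for $q$ cops. Since $k \leq q$, at least one parallel class $\mathcal{P} = \{\ell_1,\dots,\ell_q\}$ survives the deletion; place one cop on each $\ell_i$. The lines of $\mathcal{P}$ partition the $q^2$ points of $\pi$, so whenever the robber occupies a point $p$, exactly one cop is adjacent to him and captures him on the next move. If instead the robber sits at a surviving line $\ell \notin \mathcal{P}$, then $\ell$ is not parallel to any $\ell_i$, so each $\ell_i$ meets $\ell$ in a unique point, and these $q$ intersection points are precisely the $q$ neighbors of $\ell$ in $G$. The cops move simultaneously to these intersection points, so every neighbor of $\ell$ becomes occupied; the robber must stay on $\ell$, and in the following round a single cop steps from its point back to $\ell$, ending the game.

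I do not foresee a major obstacle: the entire argument rests on the dual role of a single parallel class as both a partition of the points (capturing any point-robber in one move) and a transversal-cover for every line outside the class (trapping any line-robber within two more moves). The only sanity checks are the girth-and-degree calculation that feeds Theorem \ref{mindeg} and the verification that the deletion does not destroy the incidence-geometric features used in the cop strategy, both of which are immediate.
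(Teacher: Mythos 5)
Your proposal is correct and follows essentially the same route as the paper: the identical construction (incidence graph of an affine plane of order $q$ with the lines of $k$ parallel classes deleted), the same order and degree counts, and the same lower bound via girth at least $6$ and Theorem~\ref{mindeg}. The upper bound also rests on the paper's idea of placing one cop on each line of a surviving parallel class; your execution (all $q$ cops simultaneously occupy the intersection points of the robber's line, sealing every neighbour) differs only tactically from the paper's version, which moves a single cop to one point of that line and argues the robber has no safe move.
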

\noindent Note the graphs described in Theorem~\ref{mey_extr} have order $(1-o(1))q^{2}$ with cop number $(1-o(1))q$ and so are Meyniel extremal. In particular, we can set $k=q^{1-\varepsilon },$ for $\varepsilon \in (0,1)$ and obtain infinitely many distinct Meyniel families.

\begin{proof}[Proof of Theorem~\ref{mey_extr}]
Consider an affine plane $\mathcal{A}$ with order $q.$  The incidence graph $G(\mathcal{A})$ has order $2q^{2}+q$, and
is $(q+1,q)$-regular. As $G(\mathcal{A})$ has girth at least $6,$ we have
that the graphs $\{G(\mathcal{A}):\mathcal{A}$ an affine plane of order $q\}$
form a Meyniel extremal family.

Affine planes of order $q$ may be partitioned into $(q+1)$-many parallel classes, each
containing $q$ lines. Form the partial planes $\mathcal{A}^{-k}$ by deleting the lines in
some fixed set of $k>0$ parallel classes. For a given $\mathcal{A}^{-k}$ the bipartite graph $G(\mathcal{A}%
^{-k})$ is then $(q+1-k,q)$-regular, and has order $2q^{2}+(1-k)q.$ As the
girth is at least $6,$ we have by Theorem~\ref{mindeg} that%
$$
c(G)\geq q+1-k.
$$

We claim that
\begin{equation}
c(G)\leq q.  \label{lal}
\end{equation}%
To prove (\ref{lal}), we play with $q$ cops. Fix a parallel class which was not deleted, say $\ell,$ and place one cop on each line of the parallel class. As each point is on some line in $\ell,$ the robber must move to some line $L \not \in \ell$ to avoid being captured in the first round.

Fix a point $P$ of $L,$ and let $L'$ be the line of $\ell$ which intersects $L$ at $P.$ Move the cop on $L'$ to $P.$ Now the robber cannot remain on $L$ without being captured, and so must move to some point. However, each point not on $L'$ is joined to some cop, so the robber must move to a point of $L'.$ But the unique point on $L'$ joined to $L$ is $P,$ which is occupied by a cop. \end{proof}

Recent work in \cite{bb} provides constructions of new Meyniel extremal families from designs and geometries.

\section{Acknowledgements}
The author thanks Graeme Kemkes, Richard Nowakowski, and Pawel Pra{\l }at for helpful discussions.

\end{document}